\documentclass[pdflatex,sn-mathphys-num]{sn-jnl}
\usepackage{graphicx}%
\usepackage{multirow}%
\usepackage{amsmath,amssymb,amsfonts}%
\usepackage{amsthm}%
\usepackage{mathrsfs}%
\usepackage[title]{appendix}%
\usepackage{xcolor}%
\usepackage{textcomp}%
\usepackage{manyfoot}%
\usepackage{booktabs}%
\usepackage{algorithm}%
\usepackage{algorithmicx}%
\usepackage{algpseudocode}%
\usepackage{listings}%
\newtheorem{lemma}{Lemma}
\newtheorem{corollary}{Corollary}

\theoremstyle{thmstyleone}%
\newtheorem{theorem}{Theorem}
\theoremstyle{thmstyletwo}%
\newtheorem{remark}{Remark}%
\theoremstyle{thmstylethree}%
\begin{document}

\title[BickleyNaylor Functions]{A Finitely Generated Module Representation of the Bickley-Naylor Functions}


\author[1,2]{\fnm{Anthony A. Ruffa}}
\email{ruffa1498@gmail.com}

\author*[2,3]{\fnm{Bourama Toni}}
\email{bourama.toni@howard.edu}

\equalcont{These authors contributed equally to this work.}


\affil[1]{\orgdiv{Innovative Integrals}, \orgname{LLC}, \orgaddress{\street{50 Cedarwood Lane}, \city{Hope Valley},  \state{RI}, \postcode{02832}, \country{USA}}}

\affil*[2]{\orgdiv{Department of Mathematics}, \orgname{Howard University}, \orgaddress{\street{2441 Sixth Street NW}, \city{Washington}, \state{DC}, \postcode{20059},  \country{USA}}}



\abstract{We present a novel and non-standard derivation of the Bickley-Naylor functions $Ki_n, n\in\mathbb{N}_0.$ A $3\times 3$ system of equations is developed, the solution of which yields new explicit expressions for the Bickley-Naylor functions $Ki_2$, $Ki_3$, and $Ki_4$. Using a recurrence relation, expressions follow for any other order of Bickley-Naylor functions. We then characterize the infinite set of $Ki_n, n\in\mathbb{N}_0$ as a four-dimensional span of modified Bessel and Struve functions over the ring of real polynomials, thus providing a new computational and modeling tool for studying and applying the Bickley-Naylor functions.}

\keywords{Bickley-Naylor functions, modified Bessel functions, modified Struve functions, multivariate power substitution}


\pacs[MSC Classification]{2020 MSC: 35Q30, 65M06}

\maketitle

\section{Introduction}

The Bickley-Naylor functions \cite{bickley1935} can be useful for addressing problems involving radiative heat transfer \cite{Altaç1996, Wang, Djeumegni} and neutron transport \cite{lewismiller}. They cannot be evaluated analytically; they are typically tabulated (as in \cite{bickley1935}), approximated, or evaluated numerically, prompting the need for a variety of approaches. Here we introduce a non-standard approach within the framework of exponential kernel transforms having the form $F(s)=\int_a^b f(x) \:e^{-s\psi(x)}\: dx$ that are extensively exposed in \cite{Ruffa2022,Ruffa2024}.

Bickley-Naylor functions are given by various equivalent expressions, e.g., 
$\forall n\in\mathbb{N}_0$

\begin{equation}
Ki_n(x)
= \int_0^{\frac{\pi}{2}} e^{-x\sec\theta} \cos^{n-1}\theta \:d\theta\\
= \int_0^{\infty} e^{-x\cosh t} \cosh^{-n} t \:dt.
\end{equation}
They are also defined using the repeated integrals
\begin{equation}
    Ki_n(x)=\int_x^\infty Ki_{n-1}(t) dt,
\end{equation}
where $Ki_0(x) = K_0(x).$ They have the recursive properties \cite{Abramowitz}

\begin{equation}
\begin{split}
Ki_0(x)= & K_0(x);\\
n\,Ki_{n+1}(x) = & -x\,Ki_n(x) + (n-1)\,Ki_{n-1}(x) + x\,Ki_{n-2}(x); n\ge 2,
\label{RR}
\end{split}
\end{equation} 
which supports the derivation of the $n$th order Bickley-Naylor function $Ki_n$ in terms of the zero-order modified Bessel function $K_0$, i.e.,

\begin{equation}
    Ki_n(x) = \frac{1}{(n-1)!} \int_x^\infty
    (t-x)^{n-1}\:
     K_0(t) \:dt, n \ge 1.
\end{equation}

In our previous work \cite{Ruffa2024} we proved a fundamental identity for the first order Bickley-Naylor function $Ki_1,$ which we recall here as a lemma to our main theorem.

\begin{lemma}
\begin{equation}
Ki_1(x)
=\int_0^{\frac{\pi}{2}} e^{-x\sec\theta}\: d\theta\\
=\frac{\pi}{2} - \frac{\pi x}{2} \left[ K_0(x) \:\pmb{L}_{-1}(x) + K_1(x) \:\pmb{L}_0(x) \right], 
\label{Ki1}
\end{equation}
\end{lemma}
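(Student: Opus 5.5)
Since $Ki_1(x)=\int_x^\infty K_0(t)\,dt$, the cleanest route is to verify the identity through its derivative and a boundary value. Write the claimed right-hand side as
\[
G(x)=\frac{\pi}{2}-\frac{\pi}{2}\,x\bigl[K_0(x)\pmb{L}_{-1}(x)+K_1(x)\pmb{L}_0(x)\bigr].
\]
It suffices to show $G'(x)=-K_0(x)$ for $x>0$ and $G(0^+)=\pi/2=Ki_1(0)$, since $Ki_1(0)=\int_0^{\pi/2}d\theta=\pi/2$ and $Ki_1'=-Ki_0=-K_0$.

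\textbf{Derivative.} I would use the standard differentiation rules
\[
\bigl(x^{\nu}\pmb{L}_\nu\bigr)'=x^{\nu}\pmb{L}_{\nu-1},\qquad \bigl(x^{-\nu}\pmb{L}_\nu\bigr)'=x^{-\nu}\pmb{L}_{\nu+1}+\frac{2^{-\nu}}{\sqrt{\pi}\,\Gamma(\nu+3/2)},
\]
and $K_0'=-K_1$, $(xK_1)'=-xK_0$. Group the bracket as $K_0\cdot(x\pmb{L}_{-1})+(xK_1)\cdot\pmb{L}_0$. Then $\pmb{L}_0'=\pmb{L}_1+2/\pi$. For $x\pmb{L}_{-1}$, the $\nu=-1$ case of the first rule gives $(x^{-1}\pmb{L}_{-1})'=x^{-1}\pmb{L}_{-2}$, which is awkward. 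Instead I would use the recurrence $\pmb{L}_{\nu-1}-\pmb{L}_{\nu+1}=\frac{2\nu}{x}\pmb{L}_\nu+\frac{(x/2)^\nu}{\sqrt{\pi}\,\Gamma(\nu+3/2)}$. At $\nu=0$ this yields $\pmb{L}_{-1}=\pmb{L}_1+2/\pi$, so $\pmb{L}_{-1}=\pmb{L}_0'$ and $(x\pmb{L}_{-1})'=(x\pmb{L}_1+2x/\pi)'=x\pmb{L}_0+2/\pi$.

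Differentiating the bracket then gives
\[
-K_1\bigl(x\pmb{L}_1+\tfrac{2x}{\pi}\bigr)+K_0\bigl(x\pmb{L}_0+\tfrac{2}{\pi}\bigr)-xK_0\pmb{L}_0+xK_1\pmb{L}_{-1}.
\]
Since $\pmb{L}_{-1}=\pmb{L}_1+2/\pi$, the $K_1$ terms and the $xK_0\pmb{L}_0$ terms cancel, leaving $\frac{2}{\pi}K_0$. Multiplying by $-\pi/2$ gives $G'=-K_0$, as required.

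\textbf{Boundary value.} As $x\to0^+$ we have $K_0\sim-\ln x$, $\pmb{L}_{-1}(0)=2/\pi$, $K_1\sim 1/x$ and $\pmb{L}_0=O(x)$. Hence $xK_0\pmb{L}_{-1}=O(x\ln x)\to0$ and $xK_1\pmb{L}_0=O(x)\to0$, so $G(0^+)=\pi/2$. Since $G$ and $Ki_1$ have the same derivative on $(0,\infty)$ and the same limit at $0^+$, they coincide.

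The main obstacle is bookkeeping for the non-homogeneous terms in the Struve recurrences, particularly the $\nu=0$ recurrence that gives $\pmb{L}_{-1}=\pmb{L}_1+2/\pi$. The $2/\pi$ constants are exactly what produce the surviving $K_0$ term, so they must be tracked carefully. As an alternative check, one could use the known integral $\int_0^x K_0=\frac{\pi x}{2}[K_0\pmb{L}_{-1}+K_1\pmb{L}_0]$ together with $\int_0^\infty K_0=\pi/2$.
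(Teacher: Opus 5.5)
Your proof is correct, but it takes a genuinely different route from the paper. The paper does not prove this lemma in the text. It quotes it from the authors' earlier work, where identities of this kind come from the multivariate power substitution method, the same technique shown here for the $K_2$ relation. In that method a multidimensional exponential integral is evaluated in closed form with \emph{Mathematica}. It is then separately reduced by substitutions to an integral $\int_0^{\pi/2}e^{-x\sec\theta}(\cdots)\,d\theta$, and the two evaluations are equated.

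You instead verify the stated right-hand side directly. You show $G'=-K_0$ from standard recurrences and fix the constant by $G(0^+)=\pi/2$. Your bookkeeping checks out: $\pmb{L}_{-1}=\pmb{L}_1+2/\pi=\pmb{L}_0'$ and $(x\pmb{L}_{-1})'=x\pmb{L}_0+2/\pi$, so $\bigl(x(K_0\pmb{L}_{-1}+K_1\pmb{L}_0)\bigr)'=\frac{2}{\pi}K_0$. Your small-$x$ estimates also kill both product terms.

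\textbf{Comparison.} Your route is self-contained, needs no computer algebra, and shows that the lemma is just the classical formula $\int_0^x K_0(t)\,dt=\frac{\pi x}{2}\bigl[K_0\pmb{L}_{-1}+K_1\pmb{L}_0\bigr]$ combined with $\int_0^\infty K_0=\pi/2$. Its limitation is that it can only confirm a known candidate, not find one. The paper's method is a discovery mechanism; it is what produces new relations such as $x^2Ki_2+3xKi_3+3Ki_4=x^2K_2$. As a proof, however, it rests on a CAS evaluation of the multidimensional integral that is not itself justified.

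\textbf{Two minor points.} First, the detour through the $\nu=-1$ case of your first rule is unnecessary. Your second rule with $\nu=-1$ gives $(x\pmb{L}_{-1})'=x\pmb{L}_0+2/(\sqrt{\pi}\,\Gamma(1/2))=x\pmb{L}_0+2/\pi$ at once. Second, matching limits at $0^+$ uses the right-continuity of $Ki_1$ at $0$. This follows from dominated convergence in $\int_0^{\pi/2}e^{-x\sec\theta}\,d\theta$ and deserves a phrase.
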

where, for $x\in \mathbb{R}^+,$ $K_0(x)$ and $K_1(x)$ are modified Bessel functions, and $\pmb{L}_{-1}(x)$ and $\pmb{L}_{0}(x)$ are modified Struve functions.

\begin{remark}
    Importantly, we rewrite the above identity as follows:
\begin{equation}
Ki_1(x)
=
\frac{\pi}{2} -\frac{\pi x}{2} \:A(x), x \in \mathbb{R}^+,
\label{Ki1!}
\end{equation}
where $A(x) = K_0(x) \:\pmb{L}_{-1}(x) + K_1(x)\:\pmb{L}_0(x)$, i.e., a combined modified Bessel-Struve function. 

In other words, $Ki_1(x) = C(x) + P(x) \:A(x)$ is a real polynomial combination of $\{1,A\}$ for $C(x)=\frac{\pi}{2}$ and $P(x) = -\frac{\pi x}{2}.$
This identity provides an exact analytic expression for the first order Bickley-Naylor function in terms of the modified Bessel functions $K_0$ and $K_1$ and the modified Struve functions $\pmb{L}_0$ and $\pmb{L}_{-1}.$ It is one of the strongest explicit closed-form representations in the sequence of Bickley-Naylor sequences, leading it to be treated as a separate special function in its own right. 
\end{remark}

\section{A System of Equations}

Our approach is described in the following. First, we use \emph{Mathematica} to evaluate a multidimensional integral that is an extension of the types of integrals that appeared in \cite{Ruffa2024}. We state the result also as a lemma to our main theorem

\begin{lemma}

\begin{equation}
I
=
\int_0^\infty
\int_0^\infty
\int_0^\infty
e^{
-\frac{x^2}{4}
(x^2+y+z)
}\cdot
e^{-(y+z)^{-1}}\:
dx\:dy\:dz
=
\frac{8\sqrt{\pi}}{x^3}\:
K_2(x).
\label{Integrals4340a}
\end{equation}

\end{lemma}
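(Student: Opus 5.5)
The plan is to read the first integration variable as a dummy variable distinct from the parameter $x$. As typeset, the symbol $x$ plays both roles, so I rename the innermost variable to $u$:
\begin{equation*}
I(x)=\int_0^\infty\int_0^\infty\int_0^\infty e^{-\frac{x^2}{4}(u^2+y+z)}\,e^{-(y+z)^{-1}}\,du\,dy\,dz ,\qquad x>0 .
\end{equation*}
The integrand is positive and the integral will turn out finite, so Tonelli's theorem lets me integrate in any order. I will avoid relying on \emph{Mathematica} and reduce everything to one standard Bessel-type integral.

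\textbf{Step 1 (the $u$-integral).} The $u$-dependence factors out as a Gaussian:
\begin{equation*}
\int_0^\infty e^{-x^2u^2/4}\,du=\frac{\sqrt{\pi}}{x}.
\end{equation*}
Hence $I(x)=\frac{\sqrt{\pi}}{x}\,J(x)$, where
\begin{equation*}
J(x)=\int_0^\infty\int_0^\infty e^{-\frac{x^2}{4}(y+z)-\frac{1}{y+z}}\,dy\,dz .
\end{equation*}

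\textbf{Step 2 (collapse to one variable).} The integrand of $J$ depends only on $s=y+z$. I change variables $(y,z)\mapsto(s,y)$ with $0<y<s$; the Jacobian is $1$. Integrating out $y$ contributes a factor $s$, so
\begin{equation*}
J(x)=\int_0^\infty s\,e^{-as-1/s}\,ds,\qquad a=\frac{x^2}{4}.
\end{equation*}

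\textbf{Step 3 (Bessel representation).} I apply the classical integral, which follows from $K_\nu(w)=\tfrac12\int_0^\infty t^{\nu-1}e^{-\frac{w}{2}(t+1/t)}\,dt$ after the scaling $s=\sqrt{b/a}\,t$:
\begin{equation*}
\int_0^\infty s^{\nu-1}e^{-as-b/s}\,ds=2\left(\frac{b}{a}\right)^{\nu/2}K_\nu\!\left(2\sqrt{ab}\right).
\end{equation*}
I take $\nu=2$, $b=1$ and $a=x^2/4$, so that $2\sqrt{ab}=x$ and $b/a=4/x^2$. This gives $J(x)=\frac{8}{x^2}K_2(x)$, and therefore
\begin{equation*}
I(x)=\frac{8\sqrt{\pi}}{x^3}K_2(x),
\end{equation*}
which is the claim.

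The only real obstacle is interpretive. The statement must be read with a renamed dummy variable, since otherwise it is not meaningful. Once that is done, the key step is recognizing in Step 2 that the double integral over the quadrant reduces to a single integral with weight $s$. Doing so raises the Bessel order from $1$ to $2$, and that is exactly what produces $K_2$ in the final formula. Finiteness for $x>0$ is clear from the final expression, which also justifies the use of Tonelli's theorem a posteriori.
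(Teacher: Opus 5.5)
Your proof is correct, and it takes a genuinely different route: the paper gives no derivation of this lemma at all and simply reports the value as a \emph{Mathematica} evaluation.

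Your disambiguation is exactly what the paper intends, with the innermost variable a dummy $u$ and the outer $x^2/4$ the parameter. The paper's own substitution $x=\sqrt{y+z}\,u_1$ in the proof of Theorem~1 confirms this, since it turns $u^2+y+z$ into $z(1+u_1^2)(1+u_2)$ and leaves the prefactor $x^2/4$ untouched. Each of your steps checks:
\begin{itemize}
\item the Gaussian gives $\sqrt{\pi}/x$;
\item collapsing the quadrant onto $s=y+z$ gives $\int_0^\infty s\,e^{-x^2 s/4-1/s}\,ds$;
\item the representation $\int_0^\infty s^{\nu-1}e^{-as-b/s}\,ds=2(b/a)^{\nu/2}K_\nu(2\sqrt{ab})$ with $\nu=2$, $b=1$, $a=x^2/4$ gives $8K_2(x)/x^2$.
\end{itemize}
Tonelli's theorem needs only nonnegativity of the integrand, so your a posteriori finiteness remark is unnecessary, though harmless.

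It is worth seeing how your calculation fits with the rest of the paper. In Theorem~1 the paper integrates the same integral in a different order. It does $z$ first, which produces the elementary $K_{5/2}$, then $u_2$. What remains is a $u_1$-integral that becomes $x^2Ki_2+3xKi_3+3Ki_4$ after $u_1=\tan\theta$. Your order (dummy variable first, then the sum $y+z$) is the one that yields the closed form $K_2$ by hand. It also shows where the order $2$ comes from: the weight $s$ created by collapsing two variables.

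So your argument replaces the paper's computer-algebra step with a short self-contained calculation. With it, Theorem~1 reduces to applying Tonelli's theorem to one positive integrand in two orders of integration. The paper's black-box approach buys only brevity.
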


We then develop a $3\times 3$ system of equations involving Bickley-Naylor functions of consecutive orders 2, 3, and 4. We first prove

\begin{theorem}

\begin{equation}
x^2\:Ki_2(x) + 3x\:Ki_3(x) + 3\:Ki_4(x) = x^2\:K_2(x).
\label{Integrals4340}
\end{equation}
    
\end{theorem}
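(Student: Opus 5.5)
The plan is to reduce the identity, via the recurrence (\ref{RR}), to a first-order relation among $Ki_2$, $Ki_1$ and Bessel functions, and then prove that relation by differentiating and using $Ki_n' = -Ki_{n-1}$. The integral of the preceding lemma is not needed on this route.

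\textbf{Step 1: eliminate $Ki_4$ and $Ki_3$.} Taking $n=3$ in (\ref{RR}) gives
$$3Ki_4 = -xKi_3 + 2Ki_2 + xKi_1,$$
so
$$x^2Ki_2+3xKi_3+3Ki_4 = x^2Ki_2 + 2xKi_3 + 2Ki_2 + xKi_1.$$
Taking $n=2$ gives
$$2Ki_3 = -xKi_2 + Ki_1 + xKi_0, \qquad Ki_0=K_0.$$
Substituting this for $2Ki_3$, the $x^2Ki_2$ terms cancel and the left-hand side of (\ref{Integrals4340}) becomes
$$2Ki_2(x) + 2x\,Ki_1(x) + x^2K_0(x).$$

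\textbf{Step 2: rewrite the right-hand side.} By the standard recurrence $K_2(x)=K_0(x)+\frac{2}{x}K_1(x)$ we have $x^2K_2 = x^2K_0 + 2xK_1$. Hence the theorem is equivalent to
$$Ki_2(x) + x\,Ki_1(x) = x\,K_1(x), \qquad x>0.$$

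\textbf{Step 3: prove the reduced identity.} I will show that both sides have the same derivative and the same limit at infinity.
\begin{itemize}
\item From the repeated-integral definition, $Ki_n'=-Ki_{n-1}$. So the derivative of the left side is $-Ki_1 + Ki_1 + xKi_1' = -xK_0$.
\item Using $(xK_1)' = -xK_0$, the right side has the same derivative.
\item Both sides tend to $0$ as $x\to\infty$, since $Ki_n$ and $K_1$ decay like $e^{-x}$ times a power of $x$.
\end{itemize}
Therefore the two sides agree identically on $\mathbb{R}^+$. As a cross-check, the reduced identity can also be verified directly from $Ki_n(x)=\int_0^\infty e^{-x\cosh t}\cosh^{-n}t\,dt$ by integrating by parts in $t$.

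\textbf{Expected obstacle.} The main point needing care is Step 1. One must confirm that (\ref{RR}) is valid at $n=2$, where it involves $Ki_0=K_0$. This can be checked independently by differentiating the claimed relation and using $K_0'=-K_1$, or by deriving it from the integral representation via integration by parts of $\int e^{-x\cosh t}\cosh^{-n}t\,dt$. The remaining steps are routine applications of standard Bessel identities.
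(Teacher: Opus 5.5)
Your proof is correct, and it takes a genuinely different route from the paper. The paper never uses \eqref{RR} to prove this theorem. Instead it evaluates the triple integral $I$ of Lemma~2 in two ways:
\begin{itemize}
\item in closed form as $\frac{8\sqrt{\pi}}{x^3}K_2(x)$, a Mathematica evaluation;
\item via the power substitutions $x=\sqrt{y+z}\,u_1$, $y=z\,u_2$ and then $u_1=\tan\theta$, which reduce it to $\frac{8\sqrt{\pi}}{x^5}\int_0^{\pi/2}e^{-x\sec\theta}(x^2\cos\theta+3x\cos^2\theta+3\cos^3\theta)\,d\theta$.
\end{itemize}
It then recognizes $Ki_2,Ki_3,Ki_4$ through the $\sec\theta$ representation.

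That route keeps the identity logically independent of the recurrence. This is what lets the authors present it as a third equation alongside \eqref{RR} at $n=2,3$. It also showcases their substitution method, which can be run on other kernels to produce further identities.

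Your route is elementary and self-contained. It uses only $Ki_n'=-Ki_{n-1}$, $(xK_1)'=-xK_0$, $K_2=K_0+2K_1/x$ and decay at infinity, with no computer-algebra evaluation of a triple integral. It also makes visible something the paper's route obscures. Given \eqref{RR} at $n=2,3$, the theorem is equivalent to $Ki_2+xKi_1=xK_1$, which is just the $n=1$ instance of the classical recurrence read with $Ki_{-1}:=-K_0'=K_1$. So the first row of the $3\times 3$ system carries no information beyond the recurrences. Likewise, the Corollary's formula for $Ki_2$ is simply $xK_1-xKi_1$ combined with Lemma~1.

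Your flagged obstacle also dissolves within your own argument. The derivative of $2Ki_3+xKi_2-Ki_1-xK_0$ is $-(Ki_2+xKi_1-xK_1)$, which vanishes by Step~3, and both sides decay at infinity. The $n=3$ case then follows from the $n=2$ case in the same way. So if you prove Step~3 first, the whole theorem rests only on the repeated-integral definition and standard Bessel identities.
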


\begin{proof}
Consider the integral identity in \eqref{Integrals4340a}, i.e.,

\begin{equation}
I
=
\int_0^\infty
\int_0^\infty
\int_0^\infty
e^{
-\frac{x^2}{4}
(x^2+y+z)
}\cdot
e^{-(y+z)^{-1}}\:
dx\:dy\:dz
=
\frac{8\sqrt{\pi}}{x^3}\:
K_2(x).
\label{Integrals4340aa}
\end{equation}

By design, this integral is amenable to the multivariate power substitution approach described and extensively illustrated in \cite{ Ruffa2022,Ruffa2024}. Specifically, we apply the substitutions $x=\sqrt{y+z}\:u_1$, $dx=\sqrt{y+z}\: du_1$, $y=z\:u_2$, and $dy=z\: du_2$ and obtain

\begin{equation}
\begin{split}
I = & \int_0^\infty \int_0^\infty \int_0^\infty e^{-\frac{1}{4}\:z\:x^2(1+u_1^2)(1+u_2)}\cdot
e^{-z^{-1}\:(1+u_2)^{-1} } z^{\frac{3}{2}}\: \sqrt{1+u_2}\: dz\:du_2\:du_1\\
= & \frac{8\sqrt{\pi}}{x^5}
\int_0^\infty
e^{-x\sqrt{1+u_1^2}}\cdot
\frac{ 3+x^2(1+u_1^2)+3\:x\:\sqrt{1+u_1^2}}{\sqrt{(1+u_1^2)^5}}\:du_1.
\label{Integrals4340b}
\end{split}
\end{equation}

The expression \eqref{Integrals4340b} clearly suggests a trigonometric substitution. We further apply the substitution $u_1=\tan\theta$ and $du_1=\sec^2\theta\:d\theta$, which leads to 

\begin{equation}
I
=
\frac{8\sqrt{\pi}}{x^5}
\int_0^\frac{\pi}{2}
e^{-x\sec\theta}\:
(x^2\cos\theta+3x\cos^2\theta+3\cos^3\theta)\:
d\theta.
\label{Integrals4340c}
\end{equation}

To conclude, we equate \eqref{Integrals4340aa} and \eqref{Integrals4340c} and simplify. Thus the claim. \eqref{Integrals4340}.
\end{proof}

The $3\times 3$ system of equations comprise \eqref{Integrals4340} and the recurrence relation in \eqref{RR} with $n=2$ and with $n=3$, i.e.,

\begin{equation}
\begin{split}
& x^2\:Ki_2(x) + 3x\:Ki_3(x) + 3\:Ki_4(x) = x^2\:K_2(x);\\
& 2\:Ki_{3}(x) = -x\:Ki_2(x) + Ki_{1}(x) + x\:Ki_{0}(x) ;\\
& 3\:Ki_{4}(x) = -x\:Ki_3(x) + 2\:Ki_{2}(x) + x\:Ki_{1}(x).
\end{split}
\label{RR2}
\end{equation}

The unknowns are $Ki_2(x)$, $Ki_3(x)$, and $Ki_4(x)$. Substituting $Ki_0(x)=  K_0(x)$ and \eqref{Ki1} into \eqref{RR2} leads to

\begin{equation}
\begin{gathered}
\begin{bmatrix} 
x^2 & 3\:x & 3 \\
x & 2 & 0 \\ 
-2 & x & 3
\end{bmatrix} 
\begin{Bmatrix} 
Ki_2(x)\\
Ki_3(x)\\
Ki_4(x)
\end{Bmatrix} 
= 
\begin{Bmatrix} 
x^2\:K_2(x) \\ 
Ki_1(x)+x \:K_0(x) \\
x \:Ki_1(x)
\end{Bmatrix}\\
=
\begin{Bmatrix} 
x^2\:K_2(x) \\ 
\frac{\pi}{2} + x\:K_0(x) - \frac{\pi\:x}{2} A(x) \\
\frac{\pi\:x}{2}  - \frac{\pi\:x^2}{2} A(x)  
\end{Bmatrix}.
\end{gathered}
\label{syseqs}
\end{equation}

Solving the system of equations \eqref{syseqs} is straightforward (e.g., via \emph{Mathematica}) and it yields the corollary

\begin{corollary}

\begin{equation} 
\begin{split}
Ki_2(x)
= & -\frac{\pi x}{2} + x K_1(x) + \frac{\pi x^2}{2} [ K_0(x) \:\pmb{L}_{-1}(x) + K_1(x) \:\pmb{L}_0 (x)]\\
= & P_2(x)\:K_1(x) + Q_2(x)\: A(x) + C_2(x),
\end{split}
\end{equation}
where $P_2(x)=x;$ $Q_2(x) = \frac{\pi x^2}{2};$ $C_2(x)= -\frac{\pi x}{2},$

and

\begin{equation} 
\begin{split}
Ki_3(x)
= & \frac{\pi (1+x^2)}{4} + \frac{x}{2} K_0(x)
- \frac{x^2}{2} K_1(x)\\
& - \frac{\pi x (1+x^2)}{4} [ K_0(x) \:\pmb{L}_{-1} (x)
+  K_1(x) \:\pmb{L}_0(x)] \\
= & P_3(x)\:K_1(x) + R_3(x)\: K_0(x) + Q_3(x) \:A(x) + C_3(x),
\end{split}
\end{equation}

where $P_3(x)=-\frac{x^2}{2};$ $R_3(x)=\frac{x}{2};$ $Q_3(x) =- \frac{\pi x (1+x^2)}{2};$ $C_3(x)= \frac{\pi(1+x^2)}{4},$

and

\begin{equation} 
\begin{split}
Ki_4(x)
= & -\frac{\pi x (3+x^2)}{12} - \frac{x^2} {6}K_0(x) + \frac{x (4+x^2)}{6} K_1(x) \\
& + \frac{\pi x^2 (3+x^2)}{12} 
[K_0(x) \:\pmb{L}_{-1} (x) + K_1(x) \:\pmb{L}_0 (x) ] \\
= & P_4(x) K_1(x) + R_4(x)  K_0(x) + Q_4(x)  A(x) + C_4(x),
\end{split}
\end{equation}
where $P_4(x)=\frac{\pi(4+x^2)}{6};$ $Q_4(x) = \frac{\pi x^2(3+x^2)}{12};$ $R_4(x)= -\frac{x^2}{6};$ $ C_4(x)=-\frac{\pi x (3+x^2)}{12}.$

\end{corollary}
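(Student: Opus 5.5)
The plan is to solve the linear system \eqref{syseqs} by hand, using the right-hand side already written in terms of $K_0$, $K_2$, $A$ and constants, and then to rewrite everything in the basis $\{K_0,K_1,A,1\}$. First I compute the determinant of the coefficient matrix:
\begin{equation*}
x^2(6-0) - 3x(3x-0) + 3(x^2+4) = 12 .
\end{equation*}
This is a nonzero constant, so the system has a unique solution for every $x>0$. The inverse matrix has polynomial entries divided by $12$, so the solution is automatically a polynomial combination of the right-hand sides. The only extra ingredient is the standard recurrence $K_2(x) = K_0(x) + \frac{2}{x}K_1(x)$. With it, $b_1 := x^2K_2(x) = x^2K_0(x) + 2xK_1(x)$, which brings $K_1$ into the picture.

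I would eliminate rather than apply Cramer's rule. Write $b_2 = \frac{\pi}{2} + xK_0 - \frac{\pi x}{2}A$ and $b_3 = \frac{\pi x}{2} - \frac{\pi x^2}{2}A$. Subtracting the first row from the third gives
\begin{equation*}
-(2+x^2)Ki_2 - 2x\,Ki_3 = b_3 - b_1 .
\end{equation*}
The second row gives $2Ki_3 = b_2 - x\,Ki_2$. Substituting this in, the $x^2 Ki_2$ terms cancel and I get $Ki_2 = \frac{1}{2}(b_1 - b_3 - x b_2)$. Expanding, the $x^2K_0$ terms cancel and
\begin{equation*}
Ki_2 = xK_1 - \frac{\pi x}{2} + \frac{\pi x^2}{2}A ,
\end{equation*}
which is the stated formula. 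Next I obtain $Ki_3 = \frac{1}{2}(b_2 - x\,Ki_2)$, collecting the terms in $K_0$, $K_1$, $A$ and $1$. Finally I get $Ki_4 = \frac{1}{3}\bigl(x\,Ki_1 + 2Ki_2 - x\,Ki_3\bigr)$ from the third equation of \eqref{RR2}, substituting the previous two results and Lemma 1 for $Ki_1$.

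The computation is routine, so the main risk is bookkeeping in the last step, where several polynomial coefficients of $A$ and of the constant term must combine. For example, the $A$-coefficient in $Ki_4$ should assemble to $\frac{\pi x^2(3+x^2)}{12}$. As a check I would substitute the three results back into the first equation of \eqref{syseqs}, which is independent of the two recurrences used, and verify that it reduces to $x^2K_0 + 2xK_1$.

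I would also reconcile the displayed closed forms with the labelled coefficients. The displayed formulas indicate $Q_3 = -\frac{\pi x(1+x^2)}{4}$ and $P_4 = \frac{x(4+x^2)}{6}$ with no factor of $\pi$. I would take the displayed expressions as the statement to be proved and correct the labels to match.
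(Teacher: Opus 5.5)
Your proposal is correct and is essentially the paper's argument: the paper solves the $3\times 3$ system with \emph{Mathematica}, and your hand elimination ($\det = 12$, then $Ki_2=\tfrac12(b_1-b_3-xb_2)$, back-substitution through the two recurrences, and $K_2=K_0+\tfrac{2}{x}K_1$) reproduces exactly the displayed formulas. You are also right that the labelled coefficients contain typos: the displayed expressions, which give the correct limit $2/3$ for $Ki_4$ as $x\to 0^+$, force $Q_3=-\frac{\pi x(1+x^2)}{4}$ and $P_4=\frac{x(4+x^2)}{6}$.
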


To further extend the reach of this result, we express the first four terms of the Bickley-Naylor sequence $Ki_n: \forall n\in\mathbb{N}$, i.e., $Ki_1,$ $Ki_2, Ki_3,$ and $Ki_4$ in terms of modified Bessel functions $K_0(x),$ $K_1(x)$ and the combined modified Bessel-Struve function $A(x)$ with real polynomial coefficients. This is better stated algebraically as follows:

The Bickley-Naylor functions $Ki_{n=1,2,3,4}$ are real polynomial combinations of $\{1, K_0, K_1, A\}$ with polynomial coefficients. We denote: $\forall n=1,2,3, 4$ 

\begin{equation}
\begin{split}
Ki_n \in \mathbb{M} = & \langle 1,K_0, K_1, A \rangle _{\mathbb{R}[x]}\\
 = & \operatorname{Span}_{\mathbb{R}[x]}(\{1,K_0,K_1,A\}),  
\end{split}
\end{equation}
the module $\mathbb{M}$ finitely generated by the modified Bessel-Struve functions over the ring $\mathbb{R}[x]$ of real polynomials in $x.$

It remains to prove that the infinite family of the Bickley-Naylor functions, denoted $\mathbb{BN}=
\{Ki_n:n\in\mathbb{N}_0 \}$ is indeed  a subset of this four-generator module. 

A straightforward complete induction can be derived using the recurrence relation \eqref{RR}, to state

\begin{theorem}[Finite-Generation Theorem]

\begin{equation}
\{Ki_n|n\ge 0 \} \subset  \langle 1,K_0, K_1, A\rangle_{\mathbb{R}[x]}=\operatorname{Span}_{\mathbb{R}[x]}(\{1,K_0,K_1,A\}).
\end{equation}
\end{theorem}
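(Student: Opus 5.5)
The proof will be a strong induction on $n$, resting on the observation that $\mathbb{M}=\langle 1,K_0,K_1,A\rangle_{\mathbb{R}[x]}$ is an $\mathbb{R}[x]$-module. Hence it is closed under addition and under multiplication by polynomials such as $x$, and also under multiplication by the rational scalars $1/n$ that appear in the recurrence \eqref{RR}. The induction will use the recurrence in the form
\begin{equation*}
Ki_{n+1}(x)=\frac{1}{n}\Bigl[-x\,Ki_n(x)+(n-1)\,Ki_{n-1}(x)+x\,Ki_{n-2}(x)\Bigr],\qquad n\ge 2,
\end{equation*}
which expresses $Ki_{n+1}$ as an $\mathbb{R}[x]$-linear combination of the three preceding functions $Ki_n$, $Ki_{n-1}$, $Ki_{n-2}$.

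\emph{Base cases.} Because the recurrence reaches back three steps, I need three consecutive members in $\mathbb{M}$ to start.
\begin{itemize}
\item $Ki_0=K_0=0\cdot 1+1\cdot K_0+0\cdot K_1+0\cdot A\in\mathbb{M}$.
\item By the first lemma and the remark, $Ki_1=\frac{\pi}{2}\cdot 1-\frac{\pi x}{2}A\in\mathbb{M}$.
\item By the corollary, $Ki_2=-\frac{\pi x}{2}+xK_1+\frac{\pi x^2}{2}A\in\mathbb{M}$.
\end{itemize}
These three suffice. The corollary also gives $Ki_3$ and $Ki_4$ explicitly, which serves as a consistency check but is not logically needed. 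I will not rely on the $P_4,Q_3$ coefficient labels in the corollary, some of which look mistyped. Only membership matters, and that is read off from the first displayed lines of the corollary.

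\emph{Inductive step.} Fix $n\ge 2$ and assume $Ki_m\in\mathbb{M}$ for all $m\le n$. Then $Ki_n$, $Ki_{n-1}$, $Ki_{n-2}$ lie in $\mathbb{M}$. The displayed form of \eqref{RR} writes $Ki_{n+1}$ as a combination of these three with coefficients $-x/n$, $(n-1)/n$, $x/n$, all in $\mathbb{R}[x]$. Closure of $\mathbb{M}$ then gives $Ki_{n+1}\in\mathbb{M}$. If $Ki_m=P_m+R_mK_0+S_mK_1+Q_mA$, this step yields explicit coefficient recurrences, for example
\begin{equation*}
Q_{n+1}=\frac{1}{n}\bigl(-xQ_n+(n-1)Q_{n-1}+xQ_{n-2}\bigr),
\end{equation*}
and identically for $P$, $R$, $S$. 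These show that each coefficient stays polynomial of degree growing at most linearly in $n$.

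\emph{Main obstacle.} The real issue is not the induction but the validity of \eqref{RR} at the low end. The standard Abramowitz--Stegun recurrence holds for $n\ge 2$, and its $n=2$ instance involves $Ki_0=K_0$. This has to be accepted as given, as the paper does when it uses \eqref{RR} with $n=2,3$. If one doubts the $n=2$ instance, it can be avoided by taking $Ki_0,Ki_1,Ki_2,Ki_3$ from the lemma and corollary as base cases and starting the induction at $n=3$. Either way, the argument reduces to module closure. I would add a remark that the generators are understood as functions on $\mathbb{R}^+$, so the statement is about a submodule of the $\mathbb{R}[x]$-module of real functions there. Linear independence of $\{1,K_0,K_1,A\}$ is not needed for the containment claimed.
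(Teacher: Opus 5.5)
Your proof is correct and follows the paper's own argument: induction via the recurrence \eqref{RR} together with closure of $\mathbb{M}$ under $\mathbb{R}[x]$-linear combinations. It is actually tighter than the paper's version, which lists only $Ki_0,Ki_1$ as base cases for a recurrence that reaches back three steps and misprints the recurrence without the $x$ on $Ki_{n-2}$; you supply $Ki_2$ from the corollary and use the correct form. One minor caveat: your fallback of taking $Ki_2,Ki_3$ from the corollary does not really avoid the $n=2$ instance of \eqref{RR}, because the corollary was derived from a system that contains that very instance. The worry is moot anyway, since integrating $\frac{d}{dt}\bigl[e^{-x\cosh t}\cosh^{-n}t\,\sinh t\bigr]$ over $(0,\infty)$ proves \eqref{RR} directly for every $n\ge 2$.
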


\begin{proof} \noindent
\begin{enumerate}
\item $Ki_0 = K_0 \in \mathbb{M}, Ki_1 \in \mathbb{M}.$
\item Induction Hypothesis: Assume $ Ki_{n-2}, Ki_{n-1}, Ki_n \in \mathbb{M}.$ From the recurrence relation $nKi_{n+1}=-xKi_n+(n-1)Ki_{n-1}+Ki_{n-2}$, and the fact that $\mathbb{M}$ is a $\mathbb{R}[x]-$module, we derive $xKi_n, Ki_{n-1}, xKi_{n-2} \in \mathbb{M}.$ Therefore $Ki_{n+1} \in \mathbb{M}.$
\item In conclusion, $Ki_n\in\mathbb{M}, \forall n \ge 0 .$
\end{enumerate}
Hence, the claim.
\end{proof}

Note that our claim does not establish an equality between the infinite set of Bickley-Naylor functions or the span of the Bickley functions over $\mathbb{R}[x]$ and the four-generator $\mathbb{R}[x]-$module $\mathbb{M}$. Indeed, the constant function $1$ is not a Bickley function, as these functions actually  exponentially decay as $x \rightarrow \infty.$
Nonetheless, this result is indeed profound. 
 
\section{Why the Finitely Generated Module Structure Matters}

We reveal with the Finite-Generation Theorem that, beyond the explicit representation of $ Ki_2,$ $Ki_3$, $Ki_4$, the family of the Bickley-Naylor functions possesses a structure of a  $\mathbb{R}[x]-$module generated by the four functions $\{1, K_0, K_1, A\},$ with $A(x)=K_0 \:\pmb{L}_{-1} + K_1 \:\pmb{L}_0$. This is indeed a new structural perspective on the Bickley-Naylor functions which has several implications.

\begin{enumerate}
    \item Finite Generation and Dimension Reduction: The Bickley-Naylor family of functions has traditionally been treated as an unstructured collection of independent ``special functions'' to be tabulated, approximated, or evaluated numerically. It is now revealed as a four-generator $\mathbb{R}[x]-$module. That is, every $Ki_n, n\ge 0$ can now be evaluated through only four modified Bessel and Struve functions involving some polynomial arithmetic.
    \item Computational consequences: The Finite-Generation Theorem reduces the computation and evaluation of a Bickley-Naylor function of any order to the computation of the generators $K_0,$ $K_1,$ $A$ with some polynomial operations. This is indeed an alternative computational framework to the traditional use of tabulation, recursive processes, or numerical approximation.
    
    \item Analytical and modeling implications: Every element $Ki_n, n\ge 0$ of the $\mathbb{BN}$ is also an element of the module generated by $\{1,K_0,K_1,A\}.$ Therefore it inherits any known feature and property of the modified Bessel-Struve generators. For example, the well-known asymptotics of $K_0,K_1, \pmb{L}_{-1}, \pmb{L}_0$ infer asymptotic expansions for every $Ki_n,n\in\mathbb{N}_0.$ They certainly lead to closed-form evaluations, new recurrence forms, and integral transforms in the applied sciences. This could prove useful in applications involving radiative heat transfer, neutron transport, and related kernel-based models.
\end{enumerate}

Therefore, the Finite-Generation Theorem uncovers an algebraic structure underlying the Bickley–Naylor family of functions. Such a structure would suggest new directions for both theoretical and computational investigations. For example, a finitely generated $\mathbb{R}[x]-$module structure may direct future research to the use of Gr\"obner-basis methods for algorithmic generation and simplification of the Bickley-Naylor functions.
 
\section{Concluding Remarks}
We studied the Bickley-Naylor family of functions $Ki_n,n\ge 0,$ using a non-standard approach based on the multivariate power substitution. We then derived and solved a novel $3\times 3$ system of equations to obtain explicit representations of $Ki_2,$ $Ki_3,$ and $Ki_4.$ These representations were naturally extended through the classical recurrence relation to higher-order Bickley-Naylor functions.

More significantly, we showed that every Bickley-Naylor function of the $\mathbb{BN}$ family is in fact an element of the finitely generated $\mathbb{R}[x]-$module $\mathbb{M}=\langle 1, K_0,K_1,A\rangle_{\mathbb{R}[x]}, A(x)=K_0 \:\pmb{L}_{-1} + K_1 \:\pmb{L}_0.$ That is, $\mathbb{BN},$ seemingly a collection of analytically unrelated special functions, admits a finite-generation framework with only four generators, i.e., a new perspective with potential implications for computation, asymptotic analysis, integral transforms, and numerical evaluation. The properties of the generators $K_0,K_1,A$ can support the derivation of equivalent properties for a Bickley-Naylor function of any order. This could be considered as a form of algebraic reduction for problems in radiative heat transfer, neutron transport, and related applications. Future work may extend the finite-generation framework to other special functions.

\end{document}